\newcommand{\dd}
\newcommand{\be}{\begin{equation}}
\newcommand{\ee}{\end{equation}}
\newcommand{\bea}{\begin{eqnarray}}
\newcommand{\eea}{\end{eqnarray}}
\newcommand{\bee}{\begin{eqnarray*}}
\newcommand{\eee}{\end{eqnarray*}}
 \newtheorem{thm}{Theorem}[section]
 \newtheorem{cor}[thm]{Corollary}
 \newtheorem{lem}[thm]{Lemma}
 \newtheorem{prop}[thm]{Proposition}
 \theoremstyle{definition}
 \newtheorem{fac}[thm]{Fact}
 \theoremstyle{remark}
 \theoremstyle{example}
\begin{document}

\title [GENERALIZED SKEW DERIVATION ON IDEAL WITH ENGEL
CONDITIONS ]
{GENERALIZED SKEW DERIVATION ON IDEAL WITH ENGEL
	CONDITIONS}

\author[A. Pandey, B. Prajapati]{ Ashutosh Pandey, Balchand Prajapati}
\address{A. Pandey, School of Liberal Studies, Ambedkar University Delhi, Delhi-110006, INDIA.}
\email{ashutoshpandey064@gmail.com}
\address{B. Prajapati, School of Liberal Studies, Ambedkar University Delhi, Delhi-110006, INDIA.}
\email{balchand@aud.ac.in}

\thanks{{\it 2010 Mathematics Subject Classification.} 16N60, 16W25 }
\thanks{{\it Key Words and Phrases.}  Lie ideals, generalized skew derivations, extended centroid, Utumi quotient ring. }


\begin{abstract}
Let $R$ be a prime ring of characteristic different from $2$, $U$ be the Utumi quotient ring of $R$ and $C$ be the extended centroid of $R$. Let $F$ be a generalized skew derivation on $R$, $I$ be a non-zero ideal of $R$ and $m, n_1,n_2, \ldots,n_k \geq 1$ are fixed integers such that $[F(u^m),u^{n_1},u^{n_2},\ldots,u^{n_k}]=0$ for all $u\in I$ then there exists $\lambda \in C$ such that $F(x)=\lambda x$ for all $x\in R$.
\end{abstract}

\maketitle
\section{Introduction} Throughout the article $R$ denotes a prime ring with center $Z(R)$. The Utumi quotient ring of $R$ is denoted by $U$. The center of $U$ is called the extended centroid of $R$ and it is denoted by $C$. The definition and construction of $U$ can be found in \cite{beidar1995}. The commutator $ab-ba$ of two elements $a$ and $b$ of $R$ is denoted by $[a,b]$. Define $[a,b]_0=a$ and for $k\geq 1$ the $k$th commutator of $a$ and $b$ is defined as $[a,b]_k= [[a,b],b]_{k-1}=\sum_{i=0}^{k}(-1)^{i}{k\choose i}b^{i}ab^{k-i}$. Also $[a_1,a_2,\ldots,a_k]=[[a_1,a_2,\ldots,a_{k-1}],a_k]$ for all $a_1,a_2,\ldots,a_k \in R$, and for $k\geq 2$. An additive mapping $d:R\rightarrow R$ is said to be a derivation if $d(xy)=d(x)y +xd(y)$ for all $x, y\in R$. An additive mapping $F:R\rightarrow R$ is said to be a generalized derivation if there exists a derivation $d$ on $R$ such that $F(xy)=F(x)y+xd(y)$ for all $x,y\in R$. In \cite{posner1957} Posner proved that if $d$ is derivation of a prime ring $R$ such that $[d(x),x]\in Z(R)$ for all $x\in R$ then either $d=0$ or $R$ is a commutative ring. In \cite{lanski1993engel} Lanski generalized the Posner's result by proving it on Lie ideal $L$ of $R$. More precisely, Lanski proved that if $[d(x),x]_{k}\in Z(R)$ for all $x\in L$ and $k\geq 1$ then char$(R)=2$ and $R\subseteq M_2(\mathbb{F})$, for a field $\mathbb{F}$, equivalently $R$ satisfies standard identity $s_4$. In 2008,  Arga\c{c} et al. in \cite{argac2008}, generalized Lanski's result by replacing derivation $d$ by generalized derivation $F$. More precisely they proved that $[F(x),x]_k=0$, for all $x\in L$, then either $F(x)=ax$ with $a\in C$ or $R$ satisfies the standard identity $s_4$. The study of generalized derivations on Lie ideals and on left ideals are given in \cite{dhara2011,beidar1995,de2012,argac2008} where further references can be found out. More recently in \cite{dhara2016eng} Dhara\c{c} et al. proved the following:\\
\textit{Let $R$ be a prime ring with its Utumi ring of quotients $U$, $G$ a nonzero generalized
derivation of $R$ and $L$ a noncentral Lie ideal of $R$. Suppose that $[G(x^{m} ), x^{n_2}, \ldots,x^{n_k}] = 0$ for all $x \in L$,where $m,n_1, n_2, \ldots, n_k \geq 1$ are fixed integers. Then one of the following holds:
\begin{enumerate}
\item there exists $\beta \in C$ such that $G(x) = \beta x$ for all $x \in R$.
\item $R$ satisfies the standard identity $s_4$.
\end{enumerate}}
In this article we continue this line of investigation concerning the identity $[F(u^m),u^{n_1},u^{n_2},\ldots,u^{n_k}]=0$ for all $u\in R$, where $m,n_1,n_2, \ldots,n_k\geq 1$ are fixed integers and $F$ is a generalized skew derivation. More precisely we shall prove the following:

\textbf{Main Theorem:} Let $R$ be a prime ring of characteristic different from $2$, $U$ be the Utumi quotient ring of $R$ and $C$ be the extended centroid of $R$. Let $F$ be a generalized skew derivation on $R$, $I$ be a two sided ideal of $R$ and $m, n_1,n_2, \ldots,n_k \geq 1$ are fixed integers such that $[F(u^m),u^{n_1},u^{n_2},\ldots,u^{n_k}]=0$ for all $u\in I$ then there exists $\lambda \in C$ such that $F(x)=\lambda x$ for all $x\in R$.

We recall the following facts that are useful to prove our main theorem:
\begin{fac}\label{fac4.1}
Let $f(x_i,d(x_i),\alpha(x_i))$ is a generalized polynomial identity for a prime ring $R$, $d$ is a outer skew derivation and $\alpha$ is outer automorphism of $R$ then $R$ also satisfies the generalized polynomial identity $f(x_i,y_i,z_i)$, where $x_i,y_i,z_i$ are distinct indeterminates. (\cite[Theorem 1]{Chuang2005})
\end{fac}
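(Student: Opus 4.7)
The plan is to prove this independence-of-indeterminates theorem by following the framework that Kharchenko introduced for differential and automorphism identities and that Chuang extended to the combined setting of skew derivations with outer automorphisms in \cite[Theorem 1]{Chuang2005}.

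First, I would pass from $R$ to its Utumi quotient ring $U$: a GPI for $R$ remains a GPI for $U$, and both $d$ and $\alpha$ extend uniquely to $U$ while preserving the outer hypothesis. I then multi-linearize $f$ in each argument, so without loss of generality $f$ is multi-linear in the triples $(x_i, y_i, z_i)$; collecting monomials by their pattern of occurrences of the three types of slots reduces the assertion to the following injectivity claim: any element $g(x_i, y_i, z_i)$ of the free product $U *_C C\langle X, Y, Z\rangle$ which vanishes in $U$ after the substitution $y_i \mapsto d(x_i)$, $z_i \mapsto \alpha(x_i)$ for every choice of $x_i \in U$ must already be zero in the free product itself.

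The crux is this substitution-injectivity. The hypothesis that $\alpha$ is outer means no unit $u \in U$ satisfies $\alpha(x) = uxu^{-1}$ for all $x \in U$, and the hypothesis that $d$ is an outer skew derivation means no $b \in U$ satisfies $d(x) = bx - \alpha(x)b$. These two exclusions are precisely what block the only mechanisms (inner conjugation for $\alpha$, and inner skew commutation for $d$) by which a nonzero formal $g$ could collapse to zero after substitution. Following Chuang, I would make this precise by constructing a universal extension of $U$ in which formal symbols $\hat\alpha(x_i)$ and $\hat d(x_i)$ are adjoined compatibly with the skew Leibniz rule, and then exhibit the natural evaluation map into $U$ as injective on multi-homogeneous components exactly when both $\alpha$ and $d$ are outer; concretely, this reduces to a Vandermonde-style independence argument inside the ring of twisted polynomials $U[X;\alpha,d]$.

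The principal obstacle is the coupling imposed by the skew Leibniz rule $d(xy) = d(x)y + \alpha(x)d(y)$: one cannot first invoke the automorphism version of Kharchenko's theorem and then the derivation version, because the derivation is twisted by $\alpha$ and the two structures are intertwined on products. Resolving this requires a simultaneous analysis of the universal action of $\hat\alpha$ and $\hat d$, with careful bookkeeping of how multi-homogeneous components transform under the twisted Leibniz rule, and in particular verifying that no cancellation arising from this twisting can create a spurious identity in $U$. Once the injectivity of evaluation has been established, the assumed identity $f(x_i, d(x_i), \alpha(x_i)) = 0$ translates into the formal identity $f(x_i, y_i, z_i) = 0$ in the free product, which is exactly the conclusion being asserted.
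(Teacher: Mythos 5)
The paper does not prove this statement at all: it is quoted verbatim as a known result with the citation \cite[Theorem 1]{Chuang2005}, so there is no internal proof to compare against. Judged on its own terms, your outline correctly identifies the general shape of the Kharchenko--Chuang machinery (pass to $U$, multilinearize, reduce to injectivity of the evaluation map from a universal object built over $U*_C C\{X\}$ into $U$), but the central step is asserted rather than proved. The sentence claiming that the two outerness hypotheses ``are precisely what block the only mechanisms by which a nonzero formal $g$ could collapse to zero after substitution'' is a restatement of the theorem, not an argument for it: establishing that inner conjugation and inner skew commutation are the \emph{only} collapse mechanisms is exactly the content of Chuang and Lee's work, and it rests on the full theory of reduced sets of skew derivations and correct words (compare Fact \ref{fac4.2} in this paper, quoted from Kharchenko), none of which your sketch supplies.

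There is also a concrete inaccuracy in the claimed dichotomy for the automorphism. For identities with automorphisms the obstruction is not only innerness: in characteristic $p$ a Frobenius automorphism (one acting as $\lambda\mapsto\lambda^{p^s}$ on $C$) can be X-outer and still force nontrivial relations, so outerness of $\alpha$ alone does not let you replace $\alpha(x_i)$ by a free indeterminate. The paper itself is forced to split Lemma \ref{lem2} into the cases ``$\beta$ not Frobenius'' (where Fact \ref{fac4.2} applies directly) and ``$\beta$ Frobenius'' (where a separate scalar-substitution argument is needed); your proposed proof of the Fact would have to confront the same case division, and as written it does not. Finally, the ``Vandermonde-style independence argument inside $U[X;\alpha,d]$'' is named but not carried out, and it is precisely in that argument that the twisted Leibniz coupling you correctly flag as the principal obstacle must be resolved. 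So the proposal is a reasonable roadmap to the literature but not a proof.
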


\begin{fac}(\cite[Theorem 6.5.9]{Kharchenko1991})\label{fac4.2}
Let $R$ be a prime ring satisfies polynomial identity of the type $f(x_j^{\alpha_i\triangle_k})=0$, where $f(z_j^{(i,k)})$ is generalized polynomial identity with coefficient from $U$, $\triangle_1,\ldots,\triangle_n$ are mutually different correct words from a reduced set of skew derivations commuting with all the corresponding automorphisms and $\alpha_1,\ldots,\alpha_m$ are mutually outer automorphisms. In this case the identity $f(z_j^{(i,k)})=0$ is valid for $U$.
\end{fac}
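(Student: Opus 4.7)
The plan is to follow the standard Chuang--Kharchenko framework. Write $F$ in the form $F(x) = bx + d(x)$ with $b \in U$ and $d$ a skew derivation of $U$ having associated automorphism $\alpha$. Since $I$ is a nonzero two-sided ideal of the prime ring $R$, the sets $I$, $R$ and $U$ satisfy the same generalized polynomial identities (GPIs) with coefficients from $U$, so the hypothesis $[F(u^m), u^{n_1}, \ldots, u^{n_k}] = 0$ extends to a constraint of the form $\Phi\bigl(u, d(u^m), \alpha(u)\bigr) = 0$ holding on all of $U$. The proof splits into an outer and an inner case.

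First suppose that at least one of $d, \alpha$ is outer. Fact~\ref{fac4.1} and Fact~\ref{fac4.2} then allow us to replace $d(u^m)$ and $\alpha(u)$ by independent indeterminates in $U$, turning $\Phi = 0$ into a genuine GPI. Comparing coefficients of the new variables yields, on the one hand, the derivation-free identity $[bu^m, u^{n_1}, \ldots, u^{n_k}] = 0$ on $U$ and, on the other hand, an iterated-commutator identity with a free variable inserted into the outermost bracket, which after a specialisation forces $d \equiv 0$. The surviving identity is precisely the one treated in the generalized-derivation theorem of Dhara et al.\ recalled in the introduction, yielding either $b \in C$ (whence $F(x) = bx$ as required) or the exceptional $s_4$ case to be handled below.

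Next suppose that both $d$ and $\alpha$ are inner, so that $\alpha(x) = qxq^{-1}$ for an invertible $q \in U$ and $d(x) = cx - \alpha(x)c$ for some $c \in U$. Then $F(x) = (b+c)x - qxq^{-1}c$ and the hypothesis becomes a pure GPI for $U$ over $C$. By Martindale's theorem, $U$ is primitive with minimal right ideal whose associated division ring is finite-dimensional over $C$, and after scalar extension to an algebraic closure we may assume $U \subseteq M_t(C)$ for some $t \geq 1$. The case $t = 1$ is immediate. For $t \geq 2$ we test the identity on matrix units $e_{ii}, e_{ij}$ and on perturbations $u = e_{ii} + \mu e_{ij}$ with $\mu \in C$ varying, use that $\mathrm{char}\,R \neq 2$, and compare coefficients of distinct powers of $\mu$ to force both $c = 0$ and $b \in C$, so that again $F(x) = bx$.

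The main obstacle is this final matrix analysis, and specifically excluding the $t = 2$ (equivalently, $s_4$) exceptional case that appears in Dhara et al.\ without an analogous alternative in our conclusion. The leverage that makes this possible is that the hypothesis is imposed on the entire two-sided ideal $I$ rather than merely on a noncentral Lie ideal, so we have a strictly larger supply of test elements in the matrix ring: in particular we can substitute $u = e_{11} + \mu e_{12}$ with $\mu \in C$ free, expand $u^m$ in closed form as a strictly upper-triangular perturbation of an idempotent, and read off coefficients of $\mu$ in $[F(u^m), u^{n_1}, \ldots, u^{n_k}]$ to obtain a contradiction unless the non-scalar part of $F$ vanishes. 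Assembling the outer and inner analyses yields $F(x) = \lambda x$ for some $\lambda \in C$, as required.
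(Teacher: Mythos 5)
Your proposal does not address the statement in question. The statement is Fact \ref{fac4.2}, which is Kharchenko's independence theorem for identities involving outer automorphisms and reduced sets of skew derivations (Theorem 6.5.9 of \cite{Kharchenko1991}): if a prime ring satisfies an identity $f(x_j^{\alpha_i\triangle_k})=0$ with the $\triangle_k$ mutually different correct words from a reduced set of skew derivations and the $\alpha_i$ mutually outer automorphisms, then the ``liberated'' identity $f(z_j^{(i,k)})=0$, with the expressions $x_j^{\alpha_i\triangle_k}$ replaced by independent indeterminates, holds on $U$. The paper does not prove this; it is quoted as an external fact with a citation. What you have written is instead a sketch of the paper's \emph{Main Theorem} (the Engel condition $[F(u^m),u^{n_1},\ldots,u^{n_k}]=0$ for a generalized skew derivation $F$ on an ideal $I$), which is a different statement.

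Worse, your argument explicitly invokes Fact \ref{fac4.1} and Fact \ref{fac4.2} in its outer case to replace $d(u^m)$ and $\alpha(u)$ by free indeterminates. If this were offered as a proof of Fact \ref{fac4.2} itself, it would be circular. A genuine proof of Fact \ref{fac4.2} would have to engage with the machinery it is stated in terms of: the notion of a reduced set of skew derivations, correct words, mutual outerness of automorphisms modulo inner ones, and the passage from differential/automorphic identities on $R$ to generalized polynomial identities on the Utumi quotient ring $U$ --- none of which appears in your proposal. If your intent was to prove the Main Theorem, you should say so and be evaluated against that statement instead; as it stands, the proposal cannot be accepted as a proof of the quoted Fact.
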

\begin{fac}\label{fac1}
Let $K$ be an infinite field and $m\geq 2$ an integer. If $P_1,\ldots,P_k$ are non-scalar matrices in $M_m(K)$ then there exists some invertible matrix $P \in M_m(K)$ such that each matrix $PP_1P^{-1},\ldots,PP_kP^{-1}$ has all non-zero entries. \cite{de2012}
\end{fac}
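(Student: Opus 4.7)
The plan is to extend $F$ to the Utumi quotient ring $U$ (where it takes the form $F(x) = ax + d(x)$ for some $a \in U$ and some skew derivation $d$ of $U$ with associated automorphism $\alpha$), to lift the Engel identity from $I$ to $U$ using the fact that $I$, $R$ and $U$ satisfy the same generalized polynomial identities with coefficients in $U$, and then to carry out a case analysis according to whether $d$ and $\alpha$ are inner or outer. Expanding $d(u^m)$ by the skew Leibniz rule introduces terms in $d(u)$ and $\alpha(u^j)$, which will be the ``free'' quantities handled by the Chuang--Kharchenko reductions.

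If $d$ is outer, Fact \ref{fac4.1} lets us replace every occurrence of $d(u)$ in the identity by an independent indeterminate; if additionally $\alpha$ is outer, Fact \ref{fac4.2} lets us replace each $\alpha(u^j)$ by another independent indeterminate. The resulting free generalized polynomial identity has bounded degree, so comparing coefficients of leading monomials in the new indeterminates forces the skew-derivation part of $F$ to collapse, leaving an identity of generalized-derivation type. We then invoke the result of Dhara et al.\ from \cite{dhara2016eng} (a two-sided ideal is in particular a noncentral Lie ideal, since $R$ is noncommutative in the nontrivial case) to conclude $F(x) = \lambda x$ for some $\lambda \in C$. The partially inner subcases (either $d$ or $\alpha$ outer but not both) are treated by the same mechanism with only one of Facts \ref{fac4.1} and \ref{fac4.2} applied.

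In the fully inner case we have $d(x) = c\alpha(x) - xc$ and $\alpha(x) = qxq^{-1}$ for some $c \in U$ and invertible $q \in U$, so $F(x) = ax + cqxq^{-1} - xc$ and the Engel condition becomes a genuine generalized polynomial identity on $U$. By Martindale's theorem, $RC$ is a primitive ring with non-zero socle; after scalar extension we may assume $R = M_t(K)$ with $K$ algebraically closed. For $t \geq 3$ we conjugate using Fact \ref{fac1} so that suitably chosen non-scalar matrices among $a$, $c$, $q$, $q^{-1}$ simultaneously have all entries non-zero, evaluate the $k$-fold commutator on standard matrix units and their perturbations, and read off scalar constraints of the form $cq, qc, a - c \in C$ that force $F(x) = \lambda x$ with $\lambda \in C$.

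The main obstacle will be the low-rank case $t = 2$, where Fact \ref{fac1} provides far less leverage and the Engel commutator identities on $M_2(K)$ admit many more solutions than in higher ranks. Here one must perform a direct, characteristic-sensitive computation in which the hypothesis char$(R) \neq 2$ is crucial to separate the contributions of $cq$ and $qc$ to $F(u^m)$ and to rule out spurious off-diagonal solutions. A secondary difficulty is the combinatorial bookkeeping of the skew Leibniz expansion of $d(u^m)$ for arbitrary $m$, which introduces many cross terms between $\alpha(u^j)$ and $u^j$ that must be tracked carefully before the Chuang--Kharchenko substitutions produce a tractable free identity.
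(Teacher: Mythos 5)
Your proposal does not address the statement at all. The statement to be proved is Fact \ref{fac1}: a purely linear-algebraic claim that over an infinite field $K$, finitely many non-scalar matrices $P_1,\ldots,P_k \in M_m(K)$ can be \emph{simultaneously} conjugated by a single invertible $P$ so that every $PP_iP^{-1}$ has all entries non-zero. What you have written instead is a proof plan for the paper's Main Theorem on generalized skew derivations with Engel conditions --- extension to the Utumi quotient ring, inner/outer case analysis for $d$ and its associated automorphism, Chuang--Kharchenko reductions, and so on. None of that bears on Fact \ref{fac1}; indeed your plan \emph{invokes} Fact \ref{fac1} as a tool in the case $t\geq 3$, which would be circular if this were meant as its proof. (For the record, the paper does not prove this fact either; it cites De Filippis and Di Vincenzo \cite{de2012}.)

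A correct argument would run along the following lines. For a fixed non-scalar $P_i$ and a fixed position $(r,s)$, the function $P \mapsto \det(P)\cdot\big(PP_iP^{-1}\big)_{rs}$ is a polynomial in the entries of $P$, and it is not identically zero on $GL_m(K)$: since $P_i$ is non-scalar, some conjugate of $P_i$ has non-zero $(r,s)$ entry (e.g.\ one can conjugate $P_i$ into a form with a non-zero off-diagonal entry and then permute/shear it into position $(r,s)$). Thus the set of invertible $P$ for which $(PP_iP^{-1})_{rs}=0$ lies in the zero set of a non-zero polynomial. There are only $k m^2$ such conditions, and because $K$ is infinite the product of the corresponding non-zero polynomials (together with $\det$) is a non-zero polynomial, hence does not vanish identically on $K^{m^2}$; any point where it is non-zero yields the desired $P$. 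Your submission contains no trace of this (or any other) argument for the claim, so as a proof of Fact \ref{fac1} it is entirely missing.
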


\begin{fac}\label{fac2}
Let $K$ be any field and $R=M_m(K)$ be the algebra of all $m\times m $ matrices over $K$ with $m\geq 2$. Then the matrix unit $e_{ij}$ is an element of $[R, R]$ for all $1\leq i\neq j\leq m$.
\end{fac}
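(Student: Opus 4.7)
The plan is to exhibit $e_{ij}$ directly as a single commutator of two matrix units, exploiting the standard multiplication rule $e_{ab}e_{cd} = \delta_{bc}e_{ad}$ for the canonical basis of $R = M_m(K)$. Under this rule a commutator of matrix units $[e_{pq}, e_{rs}]$ equals $\delta_{qr}e_{ps} - \delta_{sp}e_{rq}$, so it collapses to a single basis element whenever exactly one of the two Kronecker deltas vanishes. The task is then to choose the indices so that the surviving term is $e_{ij}$.

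To land precisely on $e_{ij}$, I would take the commutator $[e_{ii}, e_{ij}]$. Computing, $e_{ii}e_{ij} = e_{ij}$ since the inner indices match, while $e_{ij}e_{ii} = 0$ since the hypothesis $i \neq j$ forces $\delta_{ji} = 0$. Therefore $[e_{ii}, e_{ij}] = e_{ij} - 0 = e_{ij}$, which places $e_{ij}$ in $[R,R]$ as required. The assumption $m \geq 2$ is used only to guarantee that distinct indices $i \neq j$ exist in the first place; neither the characteristic nor any other structural feature of $K$ enters the argument.

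There is essentially no obstacle in this proof, which is why the statement is recorded as a fact rather than a lemma. If one wanted a slightly more conceptual framing, one could instead derive it as a corollary of the well known identification of $[M_m(K), M_m(K)]$ with the trace-zero matrices, but that characterization is itself typically proved by exhibiting the same commutators $[e_{ii}, e_{ij}]=e_{ij}$ (together with $[e_{ij}, e_{ji}] = e_{ii} - e_{jj}$ for the diagonal part), so the direct one-line computation above is the most economical route.
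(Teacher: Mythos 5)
Your computation is correct: since $e_{ab}e_{cd}=\delta_{bc}e_{ad}$, one has $[e_{ii},e_{ij}]=e_{ij}-\delta_{ji}e_{ii}=e_{ij}$ for $i\neq j$, which is exactly the standard one-line verification; the paper itself records this statement as a fact without supplying a proof, so there is nothing further to compare.
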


\begin{fac}\label{fac3}
Every generalized skew derivation $F$ of $R$ can be uniquely extended to a generalized derivation of $U$ and its assume the form $F(x)=ax+d(x)$, for some $a\in U$ and a skew derivation $d$ on $U$  \cite{chuang1993differential}.
\end{fac}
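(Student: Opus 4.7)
The plan is to strip off the $\alpha$-skew derivation part of $F$ using the twisted Leibniz identity, leaving a right $R$-module map that can be absorbed into left multiplication by a single element of $U$. Recall that a generalized skew derivation $F\colon R\to R$ carries an associated automorphism $\alpha$ and $\alpha$-skew derivation $d$, with $F(xy) = F(x)y + \alpha(x)d(y)$ and $d(xy) = d(x)y + \alpha(x)d(y)$. Setting $G(x) := F(x) - d(x)$ and subtracting the two Leibniz identities gives
\[
G(xy) = F(xy) - d(xy) = G(x)y,
\]
so $G$ is additive and right $R$-linear on $R$.

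Next I would invoke the defining universal property of the Utumi right quotient ring: every right $R$-module homomorphism from a dense right ideal of $R$ into $R$ is realized as left multiplication by a unique element of $U$. Since $R$ is prime, $R$ itself is a dense right $R$-submodule of $R$, so there is a unique $a \in U$ with $G(x) = ax$, yielding the decomposition $F(x) = ax + d(x)$ on $R$. To push this to $U$, I would first extend $\alpha$ to a unique automorphism $\bar\alpha$ of $U$ and extend $d$ to a unique $\bar\alpha$-skew derivation $\bar d$ of $U$ (Kharchenko's extension theorem for skew derivations). Then define $\bar F\colon U \to U$ by $\bar F(x) := ax + \bar d(x)$; a direct check using the twisted Leibniz rule for $\bar d$ gives $\bar F(xy) = \bar F(x)y + \bar\alpha(x)\bar d(y)$, so $\bar F$ is a generalized skew derivation of $U$ whose restriction to $R$ is $F$.

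Uniqueness of the extension is then routine: any other extension $\tilde F$ must share the same $\bar\alpha$ and $\bar d$ (since those are uniquely determined by their restrictions to $R$), so $H := \bar F - \tilde F$ is additive, right $U$-linear, and vanishes on $R$; density of $R$ in $U$ forces $H \equiv 0$. The main obstacle — and the reason the paper cites \cite{chuang1993differential} rather than reprove the statement — is assembling the three substantive inputs: the extension of $\alpha$ to $U$, Kharchenko's extension of $d$ to $U$, and the universal right-hom property that plants $G$ inside $U$ as left multiplication. Once those are granted, the decomposition $F(x) = ax + d(x)$ and its uniqueness are formal consequences of the calculation above.
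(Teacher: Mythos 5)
The paper does not actually prove this statement---it is recorded as a Fact and attributed to \cite{chuang1993differential}---so there is no internal proof to compare against; your argument must be judged on its own terms, and it is essentially correct and is the standard proof from the literature. Subtracting the twisted Leibniz rules $F(xy)=F(x)y+\alpha(x)d(y)$ and $d(xy)=d(x)y+\alpha(x)d(y)$ cancels the skew term, so $G=F-d$ is additive with $G(xy)=G(x)y$; since $R$ is a dense right ideal of itself, the defining property of the Utumi (maximal right) quotient ring produces a unique $a\in U$ with $G(x)=ax$, giving $F(x)=ax+d(x)$ on $R$. The lift to $U$ then rests on the extension theorems for the automorphism $\alpha$ and the skew derivation $d$, which is precisely the content of \cite{chuang1993differential} and the reason the paper cites rather than reproves the Fact. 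Two minor caveats: the phrase ``generalized derivation of $U$'' in the statement should read ``generalized skew derivation of $U$'', which is what your $\bar F$ is; and your uniqueness step quietly assumes that the associated pair $(\alpha,d)$ of an extension is determined by its restriction to $R$, which holds when $d\neq 0$ but is degenerate when $d=0$ (there $F(x)=ax$ is still unique as a map, by density of $R$ in $U$, even though the associated data is not unique). Neither point affects the validity of the argument.
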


\begin{fac}\label{fact4}
If $I$ is a two-sided ideal of $R$, then $R$, $I$ and $U$ satisfy the same differential identities \cite{lee1992semiprime}.
\end{fac}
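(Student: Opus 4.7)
The plan is to prove the three-way equality by reducing it, via Kharchenko's structure theorem for differential identities, to the corresponding (and well-known) statement for generalized polynomial identities (GPIs). The containments $I \subseteq R \subseteq U$ give one direction of every implication for free, so the content lies entirely in showing that a differential identity satisfied on $I$ propagates up to $R$, and that one satisfied on $R$ propagates up to $U$.

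For the reduction from differential identities to GPIs I would invoke Kharchenko's theorem. Each derivation of $R$ extends uniquely to $U$ and splits as the sum of an $X$-inner part $[q,\cdot\,]$ with $q \in U$ and an $X$-outer part. The inner components are absorbed directly into the polynomial coefficients of the identity, while for the outer components Kharchenko's theorem permits one to replace every occurrence of $d_j(x_i)$ by a fresh, independent indeterminate $y_{i,j}$ without changing whether the identity is satisfied. This replacement procedure is valid over the prime ring $R$, and (what is the crucial point below) over the non-zero two-sided ideal $I$, because $I$ shares the same extended centroid $C$ and the same Utumi quotient ring $U$ as $R$. The upshot is a GPI whose common validity on $I$, $R$, and $U$ is to be established.

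For the GPI transfer I would proceed in two directions. To push a GPI $g(x_1,\ldots,x_n)=0$ from $I$ up to $R$, fix $0 \neq a \in I$; for $r_1,\ldots,r_n \in R$ the elements $a r_i a$ all lie in $I$, and a standard argument using multilinearization of $g$ together with the primeness of $R$ strips off the outer factors of $a$ and yields $g(r_1,\ldots,r_n) = 0$. To push from $R$ up to $U$, use the defining density property of the Utumi quotient ring: for any finite subset $u_1,\ldots,u_n \subseteq U$ there exists a dense ideal $J$ of $R$ with $J u_i \subseteq R$, so evaluating $g$ on the $u_i$'s, multiplying by elements of $J$, and again applying primeness delivers the identity on $U$.

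The main obstacle is carrying out the Kharchenko step over $I$ rather than over $R$: his reduction is typically formulated for prime rings, so one must verify that all of its ingredients (the extended centroid, the Utumi quotient ring, the reduced set of skew derivations, and the mutually outer automorphisms referenced in Fact \ref{fac4.2}) behave the same way when computed from $I$ as they do when computed from $R$. This amounts to checking that $I$, being a non-zero two-sided ideal of the prime ring $R$, inherits exactly the same $C$ and $U$ and that every derivation of $R$ restricts compatibly to $I$. Once this identification is in place, the combination of the reduction step and the GPI transfer step yields the desired three-way equivalence of differential identities.
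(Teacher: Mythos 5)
First, note that the paper does not actually prove this statement: it is quoted as a known result with a citation to Lee \cite{lee1992semiprime}, so your proposal can only be measured against the argument in the literature. Your overall architecture --- reduce a differential identity to a generalized polynomial identity by Kharchenko's theorem (absorbing the $X$-inner parts of the derivations into the coefficients and replacing the $X$-outer parts by fresh indeterminates), transfer the resulting GPI among $I$, $R$ and $U$, then specialize the indeterminates back --- is indeed the standard skeleton for the prime case, and your observation that the Kharchenko step is legitimate over $I$ because a nonzero two-sided ideal of a prime ring has the same extended centroid and the same Utumi quotient ring as $R$ is the right key point; this is also exactly the form in which the paper's Fact \ref{fac4.2} and Fact \ref{fact6} are stated (for an ideal, with the inner/outer dichotomy).

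The genuine gap is in your GPI-transfer step. You propose to push $g=0$ from $I$ up to $R$ by substituting $ar_ia\in I$ and then ``stripping off the outer factors of $a$'' using multilinearization and primeness. This does not work as stated: $g(ar_1a,\ldots,ar_na)$ is a different generalized polynomial in which the fixed element $a$ is interleaved with the coefficients of $g$ inside the free product, and since $a$ is not invertible there is no cancellation that recovers $g(r_1,\ldots,r_n)=0$; primeness lets you remove a fixed left or right factor of a product that vanishes identically, not factors buried in the middle of a free-product expression. Moreover multilinearization is not reversible in positive characteristic (the standing hypothesis is only $\mathrm{char}(R)\neq 2$), so even if the multilinearized identity transferred you could not conclude that $g$ itself transfers. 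The assertion that $I$, $R$ and $U$ satisfy the same GPIs with coefficients in $U$ is itself a substantial theorem of Beidar \cite{beidar1978ri} and Chuang \cite{chuang1988gpis} (recorded in the paper as Fact \ref{fact5}), proved via Martindale's structure theorem and the faithfulness of the representation of $T=U*_C C\{X\}$, not by a substitution-and-cancellation trick. If you replace your ad hoc argument for this step by a citation of Fact \ref{fact5}, your outline becomes a correct, if entirely citation-driven, derivation of the prime-ring case of Lee's theorem; as written, the step that carries all the weight is the one that fails.
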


\begin{fac}\label{fact5}
If $I$ is a two-sided ideal of $R$, then $R$, $I$ and $U$ satisfies the same generalized polynomial identities with coefficients in $U$ (\cite{beidar1978ri}).  Further $R$, $I$ and $U$ satisfy the same generalized polynomial identities with automorphism in $U$  \cite{chuang1993differential}.
\end{fac}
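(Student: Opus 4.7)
The plan is to prove both assertions of Fact~\ref{fact5} by establishing the chain of containments $\mathrm{GPI}(U) \subseteq \mathrm{GPI}(R) \subseteq \mathrm{GPI}(I) \subseteq \mathrm{GPI}(U)$, where $\mathrm{GPI}(S)$ denotes the set of generalized polynomial identities (with coefficients, respectively with coefficients and automorphisms, in $U$) satisfied identically on $S$. The first two inclusions are immediate from $I \subseteq R \subseteq U$: restricting the range of the arguments cannot destroy an identity.

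For the nontrivial step $\mathrm{GPI}(I) \subseteq \mathrm{GPI}(R)$, I would use that a nonzero two-sided ideal of a prime ring is dense, in the sense that $rI \neq 0$ and $Ir \neq 0$ for every $0 \neq r \in R$. Fix a multilinear $f(x_1,\ldots,x_n) \in \mathrm{GPI}(I)$ with coefficients in $U$ (reducing to the multilinear case by standard polarization) and pick $r_1,\ldots,r_n \in R$. For any $a \in I$ the elements $ar_i$ lie in $I$, so $f(ar_1,\ldots,ar_n) = 0$. Treating $a$ as a free variable ranging over $I$ and using the primeness of $R$, one strips the factors of $a$ off each monomial and arrives at $f(r_1,\ldots,r_n) = 0$ in $R$.

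For the step $\mathrm{GPI}(R) \subseteq \mathrm{GPI}(U)$, I would exploit the concrete realization of the Utumi quotient ring: every $q \in U$ corresponds to a right $R$-module homomorphism $\phi_q : D \to R$ defined on some dense right ideal $D$ of $R$, so that $qd = \phi_q(d) \in R$ for every $d \in D$. Given $q_1,\ldots,q_n \in U$ with associated dense right ideals $D_i$, the element $f(q_1 d_1, \ldots, q_n d_n)$ lies in $R$ and vanishes by hypothesis; a density argument then forces $f(q_1,\ldots,q_n) \cdot D = 0$ for a suitable dense right ideal $D$, and hence $f(q_1,\ldots,q_n) = 0$, because no nonzero element of $U$ annihilates a dense right ideal.

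For the automorphism version, every automorphism $\alpha$ of $R$ extends uniquely to an automorphism of $U$ which sends dense ideals to dense ideals, so the same substitutions $ar_i$ and $q_i d_i$ behave well under $\alpha$: since $(ar)^\alpha = a^\alpha r^\alpha$ with $a^\alpha \in I^\alpha \subseteq R$, the density arguments go through with each variable carrying its automorphism twist. The main obstacle is the $I$-to-$R$ step: because the coefficients of $f$ live in $U$ and the substitution $x_i \mapsto a r_i$ distributes $a$ through the coefficients in complicated ways, cleanly extracting the factor $a$ and invoking density requires the Martindale--Beidar structural analysis of GPIs on prime rings developed in \cite{beidar1978ri} and its extension to automorphism identities in \cite{chuang1993differential}.
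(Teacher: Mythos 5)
The paper offers no proof of Fact~\ref{fact5}; it is imported verbatim from \cite{beidar1978ri} and \cite{chuang1993differential}, so there is no internal argument to compare yours against. Your skeleton --- the cycle $\mathrm{GPI}(U)\subseteq\mathrm{GPI}(R)\subseteq\mathrm{GPI}(I)\subseteq\mathrm{GPI}(U)$ with only the closing inclusions nontrivial --- is the right shape, and the two restriction inclusions are indeed immediate.

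Both of your nontrivial steps, however, rest on an operation that does not exist for generalized polynomials. In the $I$-to-$R$ step you substitute $x_i\mapsto ar_i$ and propose to ``strip the factors of $a$ off each monomial.'' A monomial of a multilinear generalized polynomial has the form $c_0x_{\sigma(1)}c_1x_{\sigma(2)}\cdots c_n$ with the coefficients $c_j\in U$ interleaved between the variables; after the substitution the copies of $a$ are trapped between these coefficients, and $f(ar_1,\ldots,ar_n)$ does not factor as anything times $f(r_1,\ldots,r_n)$. The same objection defeats the $R$-to-$U$ step: $f(q_1d_1,\ldots,q_nd_n)$ is not of the form $f(q_1,\ldots,q_n)\,d$ for any single $d$, so no dense right ideal is shown to annihilate $f(q_1,\ldots,q_n)$ by this route. (The preliminary reduction to the multilinear case by polarization is also delicate when $C$ is finite.) You concede at the end that the extraction ``requires the Martindale--Beidar structural analysis,'' but that analysis is not a patch on your argument --- it \emph{is} the proof. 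The arguments in \cite{chuang1988gpis} and \cite{chuang1993differential} do not strip substitutions at all: they show that a nontrivial GPI forces $R$ to be an order in a primitive ring with nonzero socle acting densely on a vector space over $C$, and then verify the identity on $U$ by evaluating on rank-one elements of the socle, which $I$, $R$ and $U$ share. For the automorphism version one additionally needs Chuang's dichotomy between $X$-inner and $X$-outer automorphisms, which your sketch does not engage with beyond noting that automorphisms extend to $U$.
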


\begin{fac}(Kharchenko [Theorem 2,\cite{kharchenko1978diff}]\label{fact6}
Let $R$ be a prime ring, $d$ a non zero derivation on $R$ and $I$ a non zero ideal of $R$. If $I$ satisfies the differential identity
$$f(r_1,\ldots,r_n,d(r_1),\ldots,d(r_n))=0$$ for all $r_1,\ldots,r_n \in I$, then either
\begin{itemize}
\item [(i)] $I$ satisfies the generalized polynomial identity $f(r_1,\ldots,r_n,x_1,\ldots,x_n)=0$
\[\text{or}\]
\item [(ii)] $d$ is $U$-inner i.e., for some $q \in U, d(x)=[q,x]$ and $I$ satisfies the generalized polynomial identity
 $f(r_1,\ldots,r_n,[q,r_1],\ldots,[q,r_n])=0.$
\end{itemize}
\end{fac}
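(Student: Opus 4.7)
The plan is to reduce the Engel-type condition to a generalized polynomial identity on $U$, split into cases according to the inner/outer nature of the skew derivation $d$ and its associated automorphism $\alpha$, and in each case pin down $F$ either by invoking the generalized-derivation theorem of Dhara et al.\ quoted in the introduction or by an explicit matrix evaluation.

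By Fact~\ref{fac3}, write $F(x)=ax+d(x)$ with $a\in U$ and $d$ a skew derivation of $U$ whose associated automorphism is $\alpha$. Facts~\ref{fact4} and \ref{fact5} lift the hypothesis from $I$ to all of $U$. Expanding
\[
d(u^m)=\sum_{i=0}^{m-1}\alpha(u)^{i}\,d(u)\,u^{m-1-i},
\]
the condition $[F(u^m),u^{n_1},\ldots,u^{n_k}]=0$ becomes a single differential/automorphic identity on $U$ in the symbols $u$, $\alpha(u)$ and $d(u)$.

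\emph{Outer case.} If $d$ is not $X$-inner, or if $\alpha$ is outer, then Facts~\ref{fac4.1} and \ref{fac4.2} let us replace $d(u)$ and/or $\alpha(u)$ by fresh independent indeterminates while preserving the identity on $U$. Setting those new variables to zero leaves the pure Engel identity $[au^m,u^{n_1},\ldots,u^{n_k}]=0$ for all $u\in U$, which is the hypothesis of the Dhara et al.\ theorem for the ordinary generalized derivation $x\mapsto ax$ and yields $a\in C$. The remaining separated components, evaluated at matrix units via Facts~\ref{fac1} and \ref{fac2}, then force the contribution of $d$ to vanish, so $F(x)=ax$ with $a\in C$.

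\emph{Inner case.} Suppose $d(x)=bx-\alpha(x)b$ and $\alpha(x)=qxq^{-1}$ for some invertible $q\in U$. Then
\[
F(x)=(a+b)x-qxq^{-1}b,
\]
and the hypothesis becomes a pure GPI on $U$ with coefficients $a,b,q,q^{-1}$. Extending scalars, $U\otimes_C\overline{C}\cong M_m(\overline{C})$. Using Fact~\ref{fac1} to find a simultaneous conjugation making $q$, $a+b$ and $q^{-1}b$ entry-wise nonzero, and Fact~\ref{fac2} to substitute matrix units $u=e_{ij}$ into the expanded commutator, coefficient comparison forces $b\in Cq$ (so $d=0$) and $a\in C$; hence $F(x)=\lambda x$.

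The main obstacle is the matrix computation in the inner case: the $\alpha$-twist blocks any direct use of the Dhara et al.\ result and one must track $q$-conjugates through the commutator expansion. Fact~\ref{fac1} is indispensable to ensure that after a single simultaneous basis change no critical entry of $q$, $a$ or $b$ happens to vanish, so that the substitutions $u=e_{ij}$ produce unambiguous coefficient equations. Moreover, since the Dhara et al.\ theorem admits an $s_4$ exception absent from the present conclusion, the low-dimensional case $R\subseteq M_2(\mathbb F)$ must be verified separately by inspecting the possible forms of a generalized skew derivation on $M_2(\mathbb F)$ and checking that the Engel identity still forces $F(x)=\lambda x$.
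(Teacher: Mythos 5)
Your proposal does not address the statement you were asked to prove. The statement is Kharchenko's theorem on differential identities (Fact~\ref{fact6}): for an arbitrary differential identity $f(r_1,\ldots,r_n,d(r_1),\ldots,d(r_n))=0$ holding on a nonzero ideal $I$ of a prime ring, either the $d(r_i)$ can be replaced by independent indeterminates so that $I$ satisfies the generalized polynomial identity $f(r_1,\ldots,r_n,x_1,\ldots,x_n)=0$, or $d$ is $U$-inner, $d(x)=[q,x]$ for some $q\in U$. What you have written is instead an outline of the paper's Main Theorem about generalized skew derivations satisfying $[F(u^m),u^{n_1},\ldots,u^{n_k}]=0$. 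Nothing in your argument establishes the inner/outer dichotomy that is the entire content of the Kharchenko statement; on the contrary, your outer case \emph{invokes} Facts~\ref{fac4.1} and \ref{fac4.2}, which are precisely Kharchenko-type replacement theorems (for skew derivations and automorphisms), so with respect to the statement at hand your argument is circular as well as off-target.

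For what it is worth, the paper does not prove this Fact either: it is quoted verbatim from Kharchenko's 1978 paper \cite{kharchenko1978diff} as a known tool. A genuine proof would require the machinery of differential identities --- reducing modulo the $U$-inner part of $d$ and showing that a nonzero identity involving an outer derivation forces the corresponding variables to be free --- none of which appears in your proposal. If your intent was to prove the Main Theorem, you should be aware that you have answered a different question; and even as a sketch of the Main Theorem it leans on the very facts (Kharchenko-type variable-replacement results) whose prototype is the statement under review.
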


\begin{fac} \big(Theorem 4.2.1 (Jacobson density theorem)\cite{beidar1995}\big)\label{fact7}
Let $R$ be a primitive ring with $V_R$ a faithful irreducible $R$-module and $D = End(V_R)$, then for any positive integer $n$ if $v_1,v_2,\ldots,v_n$ are $D$-independent in $V$ and $w_1,w_2,\ldots,w_n$ are arbitrary in $V$ then there exists $r\in R$ such that $v_ir = w_i$ for $i=1,2,\ldots,n$.
\end{fac}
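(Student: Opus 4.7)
The plan is the standard scheme for Engel-type identities with generalized skew derivations: put $F$ in canonical form, split into cases based on whether the underlying skew derivation and its associated automorphism are inner or outer, reduce to a matrix ring via GPI theory, and finish by direct matrix computation. By Fact \ref{fac3} I write $F(x)=ax+d(x)$ with $a\in U$ and $d$ a skew derivation of $U$ whose associated automorphism I call $\alpha$. Facts \ref{fact4} and \ref{fact5} let me transfer the Engel identity from $I$ to $U$, so throughout I may work in $U$.

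The case analysis splits into two main branches. If $d$ is $U$-inner, write $d(x)=bx-\alpha(x)b$ for some $b\in U$, giving $F(x)=(a+b)x-\alpha(x)b$; if $\alpha$ is also inner, say $\alpha(x)=qxq^{-1}$, then the Engel identity becomes a genuine generalized polynomial identity on $U$ with coefficients in $U$, while if $\alpha$ is outer I use Fact \ref{fac4.2} to replace $\alpha(u)$ by a fresh indeterminate. If $d$ is outer, Fact \ref{fac4.1} lets me replace $d(u)$ (and $\alpha(u)$ if $\alpha$ is outer) by independent indeterminates; specializing them to zero yields the simpler identity $[au^m,u^{n_1},\ldots,u^{n_k}]=0$, which should force $a\in C$, while their non-zero specializations provide obstructions to $d$ being nonzero. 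In every surviving case $U$ satisfies a nontrivial GPI, so by Martindale's theorem $U$ has nonzero socle; after a scalar extension to an infinite field and an application of the Jacobson density theorem (Fact \ref{fact7}), I may assume $R=M_m(K)$ for an infinite field $K$ and $m\geq 2$.

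I would then attack the matrix case by substituting elementary matrices such as $u=e_{ij}$, $u=e_{ii}+e_{jj}$, and $u=1+e_{ij}$ into the nested commutator identity, reading off entry-level relations on $a$ and $b$. Fact \ref{fac2} guarantees that the required matrix units lie in $[R,R]\subseteq I$, and Fact \ref{fac1} allows me to conjugate any putatively non-scalar coefficient so that all its entries are nonzero, which sharpens the vanishing conditions. Together these should force $b=0$ (or $b\in C$, which gets absorbed into $a$, in the degenerate case $\alpha=\mathrm{id}$) and $a$ to be a scalar matrix, yielding $F(x)=\lambda x$ with $\lambda=a\in C$.

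The principal obstacle is the matrix computation when $\alpha$ is nontrivial: the term $-\alpha(u^m)b$ breaks the symmetry exploited in the ordinary generalized-derivation version of this identity, so extracting entry-level relations from the $k$-fold nested commutator will require a delicate induction on $k$ or on the total degree $m+n_1+\cdots+n_k$. Degenerate subcases ($m=1$ or $k=1$) should reduce to the generalized-derivation result of \cite{dhara2016eng}, with the $s_4$ alternative in that theorem excluded by the char $\neq 2$ hypothesis together with a direct check using Facts \ref{fac1} and \ref{fac2}.
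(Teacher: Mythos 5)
Your proposal does not address the statement at hand. The statement labelled Fact \ref{fact7} is the Jacobson density theorem: for a primitive ring $R$ with faithful irreducible module $V_R$ and $D=End(V_R)$, given $D$-independent vectors $v_1,\ldots,v_n$ and arbitrary $w_1,\ldots,w_n$ in $V$, there is $r\in R$ with $v_ir=w_i$ for all $i$. What you have written instead is an outline of the proof of the paper's Main Theorem on generalized skew derivations with Engel conditions --- the canonical form $F(x)=ax+d(x)$, the inner/outer case split, the reduction to matrix rings, and so on. None of that bears on the density theorem. Worse, your outline explicitly invokes Fact \ref{fact7} as a tool (``an application of the Jacobson density theorem (Fact \ref{fact7})''), so read as a proof of that fact it is circular.

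A proof of the actual statement would be the classical density argument: one shows by induction on $n$ that if $v_1,\ldots,v_n$ are $D$-independent then the right annihilator in $R$ of $v_1,\ldots,v_{n-1}$ does not annihilate $v_n$; since $V$ is irreducible, $v_nA\neq 0$ for that annihilator $A$ forces $v_nA=V$, and one assembles the required $r$ from elements of such annihilators. (The key lemma is that if $v_nA=0$ then $v_n$ lies in the $D$-span of $v_1,\ldots,v_{n-1}$, which is where the endomorphism ring $D$ enters.) The paper itself does not reprove this; it cites Theorem 4.2.1 of \cite{beidar1995}. Your attempt, whatever its merits as a sketch of the Main Theorem, contains no step of this argument and therefore has to be counted as a complete miss on the assigned statement.
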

\begin{fac}\label{fac1.8}
	Let $X = \{x_1, x_2, \ldots \}$ be a countable set consisting of noncommuting indeterminates $x_1, x_2,\ldots$. Let $C\{X\}$ be the free algebra over $C$ on the set $X$. We denote $T = U *_{C} C\{X\}$, the free product of the $C$-algebras $U$ and C\{X\}. The elements of $T $ are called the generalized polynomials with coefficients in $U$. Let $B$ be a set of $C$-independent vectors of $U$. Then any element $f \in T$ can be represented in the form $f=\sum_{i} a_in_i$, where $a_i \in C$ and $n_i$ are $B$-monomials of the form $p_0u_1p_1u_2p_2\cdot u_np_n$, with $p_0, p_1,\ldots, p_n \in B$ and $u_1, u_2,\ldots,u_n \in X$. Any generalized polynomial $f = \sum_{i} a_in_i$ is trivial, i.e., zero element in $T$ if and only if $a_i = 0$ for each $i$. For further details we refer the reader to \cite{chuang1988gpis}.
\end{fac}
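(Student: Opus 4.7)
The plan is to establish both parts of the statement by exhibiting the $B$-monomials as a $C$-basis of $T$, via the natural $\mathbb{Z}_{\geq 0}$-grading $T = \bigoplus_{n\geq 0} T_n$, where $T_n$ is the $C$-span of monomials involving exactly $n$ indeterminates from $X$. The existence of the representation amounts to spanning, while the triviality criterion is precisely the linear-independence half of being a basis.

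For each fixed $n$, I would construct a $C$-module isomorphism
\[
T_n \;\cong\; U^{\otimes_C (n+1)} \otimes_C W_n,
\]
where $W_n$ is the free $C$-module on length-$n$ words $x_{i_1}\cdots x_{i_n}$, under the correspondence $u_0\otimes\cdots\otimes u_n\otimes x_{i_1}\cdots x_{i_n}\;\longleftrightarrow\;u_0 x_{i_1} u_1 \cdots x_{i_n} u_n$. Well-definedness of the forward map reflects the fact that the only relations among such monomials forced by the free-product construction are $C$-multilinearity in each of the $n+1$ $U$-slots, which is exactly what the iterated tensor product over $C$ absorbs. The reverse map comes from the universal property: one equips $\bigoplus_n U^{\otimes_C(n+1)}\otimes_C W_n$ with the concatenation product that collapses adjacent $U$-tensor factors via multiplication in $U$, and checks that the canonical inclusions from $U$ and $C\{X\}$ factor through it, so by the universal property of $U *_C C\{X\}$ one obtains the inverse. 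Taking $B$ to be a $C$-basis of $U$ (the natural reading of the hypothesis, since $C$-independence alone would not yield spanning), standard tensor-product theory shows that the tensors $p_0\otimes\cdots\otimes p_n$ with $p_j\in B$ form a $C$-basis of $U^{\otimes_C(n+1)}$, and the length-$n$ words form a $C$-basis of $W_n$; the product basis then transports to the family of $B$-monomials of degree $n$. Assembling over all $n\geq 0$ gives both the representation and the triviality criterion.

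The main obstacle is justifying the iterated-tensor decomposition $T_n\cong U^{\otimes_C(n+1)}\otimes_C W_n$ rigorously, that is, confirming that no hidden relations lurk inside the free product beyond those already captured by $C$-multilinearity in the $U$-slots. The cleanest route is to \emph{define} $U *_C C\{X\}$ via this graded tensor construction equipped with the concatenate-and-collapse multiplication, and then to verify directly that it solves the universal mapping problem for the coproduct of two $C$-algebras; identification of coefficients then reduces to the linear independence of a tensor-product basis, which is automatic. This is exactly the foundational framework developed in \cite{chuang1988gpis} that the fact references.
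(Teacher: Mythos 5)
The paper does not prove this statement at all: it is quoted as a known ``Fact'' with a citation to Chuang's 1988 paper, so there is no in-paper argument to compare yours against. On its own merits your proposal is correct and is the standard way to establish the structure of $T=U*_C C\{X\}$: grade by the number of indeterminates, identify the degree-$n$ component with $U^{\otimes_C(n+1)}\otimes_C W_n$, realize the free product concretely as $\bigoplus_n U^{\otimes_C(n+1)}\otimes_C W_n$ with the concatenate-and-collapse product, and verify the universal property so that no hidden relations beyond $C$-multilinearity can occur. Two points you handle correctly and that are worth keeping explicit: (i) since $C$ is the extended centroid of a prime ring it is a field, so $U$ is free as a $C$-module and the tensor-product basis argument goes through without flatness worries; and (ii) the statement's hypothesis that $B$ is merely a $C$-independent set is too weak for the spanning claim --- $B$ must be a $C$-basis of $U$ for every $f\in T$ to admit a $B$-monomial representation, while the uniqueness (triviality) claim holds for any independent $B$ restricted to those $f$ that do admit one. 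Your reading of $B$ as a basis matches the intended meaning in the cited source, so the proposal stands as a legitimate self-contained justification of the fact the paper takes for granted.
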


  	We begin with the following Proposition:
\begin{prop}\label{prop1}
Let $R$ be a prime ring of characteristic different from $2$, $U$ be the Utumi quotient ring of $R$, $C$ be the extended centroid of $R$ and $\beta \in Aut(U)$. Let $a,b \in U$  and $m, n_1,n_2, \ldots,n_k \geq 1$ are fixed integers such that
\begin{equation} \label{eq1}
 [au^{m} +\beta(u^{m})b,u^{n_1},u^{n_2},\ldots,u^{n_k}]=0\end{equation} for all $u\in R$ then either $\beta$ is identity map on $R$ and $a,b \in C$ or there exists an invertible element $p$ such that $\beta(x) = pxp^{-1}$, for all $x \in R$ with $p^{-1}b \in C$ and $a +b \in C$.
\end{prop}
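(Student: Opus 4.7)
I would split the argument on whether the automorphism $\beta$ is $X$-inner (implemented by conjugation by an element of $U$) or $X$-outer, and in the inner case further on whether the implementing element is central.

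\textbf{Outer case.} Suppose $\beta$ is $X$-outer. By Fact~\ref{fac4.1} the hypothesis upgrades to the generalized polynomial identity
\begin{equation*}
[au^m+v^mb,\,u^{n_1},u^{n_2},\ldots,u^{n_k}]=0 \qquad \text{for all } u,v\in U,
\end{equation*}
with $v$ a fresh indeterminate replacing $\beta(u)$. Setting $v=0$ isolates $[au^m,u^{n_1},\ldots,u^{n_k}]=0$; since $u^m$ commutes with each $u^{n_i}$, the Leibniz rule for iterated commutators rewrites this as $[a,u^{n_1},\ldots,u^{n_k}]\,u^m=0$, and a Lanski/Posner-style Engel argument forces $a\in C$. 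Setting $v=1$ in the remaining piece gives $[b,u^{n_1},\ldots,u^{n_k}]=0$ for all $u$, and the same Engel machinery yields $b\in C$. Plugging $a,b\in C$ back into the original identity leaves $b[\beta(u^m),u^{n_1},\ldots,u^{n_k}]=0$; if $b\ne 0$, a second use of Fact~\ref{fac4.1} yields $[v^m,u^{n_1},\ldots,u^{n_k}]=0$ for all $u,v$, forcing every $m$-th power to be central, which fails in a noncommutative prime ring. Hence $b=0$ and $a\in C$, consistent with the stated conclusion.

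\textbf{Inner case.} Writing $\beta(x)=pxp^{-1}$ with $p\in U$ invertible, Fact~\ref{fact5} carries the hypothesis to $U$, yielding
\begin{equation*}
[au^m+pu^mp^{-1}b,\,u^{n_1},\ldots,u^{n_k}]=0 \qquad \text{for all } u\in U.
\end{equation*}
If $p\in C$ then $\beta$ is the identity and the identity collapses to $[(a+b)u^m,u^{n_1},\ldots,u^{n_k}]=0$; the Leibniz-plus-Engel trick above gives $a+b\in C$, and a symmetric choice of substitutions separating the $a$-term from the $b$-term pins down $a,b\in C$ individually, the first alternative. If instead $p\notin C$, the displayed GPI is nontrivial in the sense of Fact~\ref{fac1.8}, so by Martindale's theorem $U\cong M_k(D)$ for some finite-dimensional division $C$-algebra $D$ with $k\ge 2$. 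After extending scalars to an algebraic closure of $C$ and applying Fact~\ref{fac1} to conjugate $p$ to a matrix with every entry nonzero, I would substitute $u=e_{ij}$ and small linear combinations thereof (using Fact~\ref{fac2}) to generate a system of linear relations in the entries of $a$, $b$, and $p$; unwinding this system yields the two structural conditions $p^{-1}b\in C$ and $a+b\in C$, the second alternative.

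\textbf{Main obstacle.} I expect the principal difficulty to lie in the matrix-ring bookkeeping in the noncentral inner case: each single substitution $u=e_{ij}$ gives a clean relation, but organising them into the concise conditions $p^{-1}b\in C$ and $a+b\in C$ requires a carefully orchestrated sequence of specialisations and uses the all-entries-nonzero hypothesis on $p$ in an essential way, in the spirit of the De~Filippis--Argaç calculations. The outer case, by contrast, reduces to Engel-condition results already available in the literature.
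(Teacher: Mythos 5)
Your decomposition into the $X$-outer and $X$-inner cases is exactly the paper's (Lemma~\ref{lem2} and Lemma~\ref{lem1} respectively), but there are two concrete gaps. First, in the outer case you apply the variable-replacement theorem to an arbitrary $X$-outer $\beta$. The paper invokes Fact~\ref{fac4.2} only after first assuming $\beta$ is \emph{not Frobenius}; when $\operatorname{char}R=p$ and $\beta$ acts on $C$ as $\lambda\mapsto\lambda^{p^s}$, the replacement of $\beta(u^m)$ by a free variable is not available, and the paper instead substitutes $\lambda u$ for $u$, compares the resulting identity with the original to peel off $[\beta(u^m)b,u^{n_1},\ldots,u^{n_k}]=0$, and then runs a separate idempotent computation ($e\beta(e)b(1-e)=0$, substitution of the idempotents $(1-e)-(1-e)xe$ and $ex(1-e)+e$, and finally $b[U,U]=0$) to force $b=0$. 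Your outline is silent on this entire branch, and it is not cosmetic: the Frobenius case is precisely where ``freeing'' $\beta(u)$ fails. (A smaller imprecision in the same branch: $[z^m,u^{n_1},\ldots,u^{n_k}]=0$ does not say that $m$-th powers are central; the contradiction is obtained, as in the paper, by Posner's theorem plus the substitution $z=e_{ii}+e_{ij}$, $u=e_{ii}$.)

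Second, in the inner case with $p\notin C$ you assert that Martindale's theorem gives $U\cong M_k(D)$ with $D$ finite dimensional; it does not. Martindale produces a primitive ring with nonzero socle acting densely on a vector space $V$ over $C$, and $V$ may be infinite dimensional, so ``substitute $u=e_{ij}$'' is not meaningful in general. The paper splits accordingly: for $\dim_CV\geq 3$ it uses the Jacobson density theorem (Fact~\ref{fact7}) to produce $x$ with $xv=0$ and $xp^{-1}bv=p^{-1}bv$, reads off $bv=0$, and concludes $p^{-1}b\in C$; only for $\dim_CV=2$ does it do matrix-entry work, and even there the argument is not a large linear system but two short steps ($u=e_{11}$ right-multiplied by $e_{22}$ gives $p_{11}q_{12}=0$, and conjugating the whole identity by $1+e_{21}$ gives $p_{12}q_{12}=0$, so $q_{12}=0$, contradicting Fact~\ref{fac1}). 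Since you defer exactly the derivation of $p^{-1}b\in C$ and $a+b\in C$ --- the actual content of the proposition --- to ``unwinding a system of linear relations,'' the hardest part of the proof is not present. Note also that the paper separately treats the possibility that the identity is a \emph{trivial} GPI via a direct free-product computation in $T=U*C\{u\}$, whereas your claim that $p\notin C$ alone makes the GPI nontrivial is unjustified (e.g.\ it fails when $b=0$ and $a\in C$).
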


We need to prove the following lemmas to prove Proposition (\ref{prop1}):

\begin{lem}\label{lem1}
	Let $R$ be a prime ring of characteristic different from $2$, $U$ be the Utumi quotient ring of $R$ and $C$ be the extended centroid of $R$. Let $a,b \in U$  and $m, n_1,n_2, \ldots,n_k \geq 1$ are fixed integers such that
	\begin{equation}\label{eq2}
	[au^{m} +pu^{m}p^{-1}b,u^{n_1},u^{n_2},\ldots,u^{n_k}]=0\end{equation} for all $u \in R$ then $ p^{-1}b \in C$ and $ a+b \in C$.
\end{lem}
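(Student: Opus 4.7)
The plan is to convert (\ref{eq2}) into a generalized polynomial identity (GPI) on $U$ via Fact~\ref{fact5}, reduce to a matrix-ring calculation via Martindale's theorem, and extract the two centrality conclusions in succession. Setting $q := p^{-1}b$, the identity reads
\[
\Phi(u) \;:=\; [\,au^m + pu^m q,\,u^{n_1},u^{n_2},\ldots,u^{n_k}\,]=0 \qquad (u\in R).
\]
I first aim to show $q \in C$; once this is known, $pu^m q = qp u^m$, so $\Phi(u)=0$ reduces to $[(a+b)u^m, u^{n_1},\ldots, u^{n_k}]=0$, from which $a+b \in C$ follows by a parallel matrix-unit argument (or by invoking the one-variable case of the main result of \cite{dhara2016eng} applied to $G(x)=(a+b)x$).

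If $\Phi(x)$ is a trivial element of the free product $U \ast_C C\{X\}$ in the sense of Fact~\ref{fac1.8}, then a monomial-by-monomial comparison immediately yields $q\in C$ and $a+b \in C$. Otherwise $\Phi$ is a nontrivial GPI on $U$, and by Martindale's theorem $U$ is a primitive ring with nonzero socle and finite-dimensional division algebra over $C$; after a standard scalar extension to an infinite field when $C$ is finite, the analysis reduces to $U = M_t(K)$ with $t\geq 2$. In this matrix setting I would use Fact~\ref{fac1} to replace $a,p,q$ by conjugates by a single invertible $P$ so that their nonscalar parts have all nonzero entries, and then test $\Phi$ on the diagonal idempotents $u = e_{ii}$ (for which $u^m = u^{n_j} = u$) and on the rank-one idempotents $u = e_{ii}+\lambda e_{ij}$. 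A short Peirce decomposition with $e = e_{ii}$ and $X = ae + peq$ gives
\[
[X,e]_k \;=\; (1-e)Xe + (-1)^k\, eX(1-e),
\]
and since the two pieces on the right lie in disjoint off-diagonal positions (column $i$ and row $i$ respectively), each must vanish separately. This forces both $a$ and $q$ to be diagonal; then extracting coefficients of $\lambda$ in the substitutions $u = e_{ii}+\lambda e_{ij}$ (which are genuinely idempotent in any characteristic because $e_{ij}^2 = 0$) yields $q_{ii}=q_{jj}$ for every pair $i,j$, so $q$ is scalar and lies in $C$. A repetition of the same matrix-unit calculation with $q \in C$ already in hand then pins down $a+b \in C$.

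The main obstacle I anticipate is this matrix-unit step: tracking the iterated commutator through the heterogeneous exponents $n_1,\ldots,n_k$ is delicate, and the hypothesis $\mathrm{char}(R)\neq 2$ enters precisely in separating the two parity cases in the Peirce analysis and in guaranteeing that the coefficients of $\lambda^r$ in the $u = e_{ii}+\lambda e_{ij}$ substitutions decouple cleanly. Fact~\ref{fac1} is essential here: after the conjugation by $P$, all relevant entries are nonzero, which is what lets a single family of substitutions produce enough scalar equations to pin down $q$ (and subsequently $a+b$) entry by entry.
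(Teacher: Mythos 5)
Your overall architecture matches the paper's: treat the non-GPI case by triviality in the free product $U\ast_C C\{u\}$, then in the GPI case pass to a primitive ring via Martindale, prove $q=p^{-1}b\in C$ first, and finally reduce to $[(a+b)u^m,u^{n_1},\ldots,u^{n_k}]=0$. However, there are two concrete gaps in the middle step. First, Martindale's theorem gives you a dense ring of linear transformations of a vector space $V$ over $C$ with nonzero socle; it does \emph{not} let you assume $U=M_t(K)$, since $\dim_C V$ may be infinite. The paper splits here: for $\dim_C V\geq 3$ (including the infinite-dimensional case) it argues directly on $V$, choosing $v$ with $\{p^{-1}bv,v\}$ independent and using the Jacobson density theorem (Fact~\ref{fact7}) to build an $x$ that produces $bv=0$, a contradiction; only the residual case $\dim_C V=2$ is handled by matrix units. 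Your proposal has no mechanism for the infinite-dimensional situation (a Litoff-type localization to $eHe$ would need to address that $a,p,q$ need not lie in the socle), so as written the argument only covers finite matrix rings.

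Second, the matrix-unit computation you sketch does not deliver what you claim. With $e=e_{ii}$ and $X=ae+peq$, the Peirce identity $[X,e]_k=(1-e)Xe+(-1)^k eX(1-e)$ is correct and the two components do vanish separately (independently of the characteristic, so this is not where $\mathrm{char}\neq 2$ enters), but the resulting equations are of the form $p_{ii}q_{ij}=0$ and $a_{ji}+(peq)_{ji}=0$ for $j\neq i$: they couple entries of $a$, $p$ and $q$ and do not force $a$ or $q$ to be diagonal. The paper instead combines the single relation $p_{11}q_{12}=0$ with the observation that the identity is preserved by every inner automorphism $\phi$, applies $\phi(u)=(1+e_{21})u(1-e_{21})$ to generate $p_{12}q_{12}=0$, and then uses invertibility of $p$ (its first row cannot vanish) to conclude $q_{12}=0$, contradicting Fact~\ref{fac1}. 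Some such extra input --- the automorphism trick together with invertibility of $p$ --- is needed; the substitutions $u=e_{ii}$ and $u=e_{ii}+\lambda e_{ij}$ alone will not close the argument. Once $q\in C$ is secured, your reduction of the tail to $a+b\in C$ via the result of \cite{dhara2016eng} is exactly what the paper does.
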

\begin{proof}
First assume that $R$ does not satisfy any non-trivial generalized polynomial identity. Let $T = U * C\{u\}	$, the free product of $U$ and $C\{u\}$, $C$-algebra in single indeterminate $u$. Then equation (\ref{eq2}) is a GPI in $T$. If  $p^{-1}b \notin C$ then $p^{-1}b$ and 1 are linearly independent over $C$. Thus from Fact (\ref{fac1.8}), equation (\ref{eq2}) implies 
\begin{equation*}
 u^{n_1+ n_2 +\ldots +n_k}pu^{m}p^{-1}b =0
\end{equation*} in $T$ implying $p^{-1}b =0$, a contradiction. Therefore we conclude that $p^{-1}b \in C$ and hence equation (\ref{eq2}) reduces to :
	\begin{equation}\label{eq3}
	[au^{m} +bu^{m},u^{n_1},u^{n_2},\ldots,u^{n_k}]=0\end{equation}
 again by \cite{dhara2016eng}, equation (\ref{eq3}) implies that $a+b \in C$.\\
Now we consider the case when equation (\ref{eq2}) is a nontrivial polynomial identity for $R$. Since $R$ and $U$ satisfy the same generalized polynomial identities (see Fact \ref{fact5}). Therefore $U$ satisfies equation (\ref{eq2}). In case $C$ is infinite the generalized polynomial identity (\ref{eq2}) is also satisfied by $U {\otimes}_C \bar C$ where $\bar C$ is the algebraic closure of $C$. Since both $U$ and $U {\otimes}_C \bar C$ are prime and centrally closed \cite{erickson1975pr}, we may replace $R$ by $U$ or $U {\otimes}_C \bar C$ according as $C$ is infinite or finite. Thus we may assume that $R$ is centrally closed over $C$ which is either finite or algebraically closed such that $[au^{m} +pu^{m}p^{-1}b,u^{n_1},u^{n_2},\ldots,u^{n_k}]=0$ for all $u \in R$. By Martindale's result \cite{martindale1969pr}, $R$ is a primitive ring with non-zero socle $H$ and $eHe$ is a simple central algebra finite dimensional over $C$, for any minimal idempotent element $e \in R$. Thus there exists a vector space $V$ over a division ring $D$ such that $R$ is isomorphic to a dense subring of ring of $D$-linear transformations of $V$. Since $C$ is either finite or algebraically closed, $D$ must coincide with $C$. \\
 Assume first that $dim_{C}V \geq 3$. If $p^{-1}b \notin C$ then there exists $v \in V$ such that $\{p^{-1}bv,v\}$ is linearly $C$-independent. Since $dim_{C}V \geq 3$ there exists $w \in V$ such that $\{p^{-1}bv,v,w\}$ is linearly $C$-independent. By Jacobson's theorem (see Fact \ref{fact7}) there exists $x \in R$ such that :
\begin{equation*}
xv =0, xp^{-1}bv = p^{-1}bv
\end{equation*} 
Then, $0= [ax^{m} +px^{m}p^{-1}b,x^{n_1},x^{n_2},\ldots,x^{n_k}]v=bv$, a contradiction, because if $ bv =0$, then  $\{p^{-1}bv,v,w\}$ will be $C$-dependent. Thus $\{p^{-1}bv,v\}$ is linearly $C$-dependent therefore $p^{-1}b \in C$ and hence equation (\ref{eq2}) reduces to 
 \begin{equation*}
 	[au^{m} +bu^{m},u^{n_1},u^{n_2},\ldots,u^{n_k}]=0
 \end{equation*}
which implies $a+b \in C$ by \cite{dhara2016eng}.\\
Now if $dim_C V= 2$, then  $ U \cong M_2(C)$. Denote $p = \sum_{ij}e_{ij}p_{ij}, q =p^{-1}b =\sum_{ij}e_{ij}q_{ij} \in M_2 (C)$, for $p_{ij}, q_{ij}\in C$ and $1 \leq i,j \leq2$, where $e_{ij}$ is the usual matrix unit with 1 at $(i,j)^{th}$ place and zero elsewhere. Assume $q \notin C$ then by Fact (\ref{fac1}), all the entries in $q$ is non-zero i.e. $q_{ij}\neq 0$ for $1\leq i,j\leq 2$.\\
Choosing $ u= e_{11}$ in equation $(\ref{eq2})$ and right multiplying by $e_{22}$ we get:
\begin{equation}\label{eqC}
p_{11}q_{12}=0
\end{equation} implying $p_{11}=0$. Let $\phi$ be an automorphism of $U$ then 
\begin{equation}
[\phi(a)u^{m} +\phi(p)u^{m}\phi(p^{-1}b),u^{n_1},u^{n_2},\ldots,u^{n_k}]=0	
\end{equation} is also an identity of $U$. Thus $ \phi(p), \phi(q)$and $\phi(a)$ must satisfy equation (\ref{eqC}). Denote $ \phi(p) = \sum_{ij}e_{ij}p^{\prime}_{ij}, \phi(q) =\sum_{ij}e_{ij}q^{\prime}_{ij}$, for $p^{\prime}_{ij}, q^{\prime}_{ij}\in C$ and $ 1\leq i,j\leq 2$, then from equation (\ref{eqC}), we have $p^{\prime}_{11}q^{\prime}_{12} =0$. In particular chossing $ \phi (u)= (1+ e_{21})u (1-e_{21})$ we get $ p_{12}q_{12}=0$, which implies  $p_{12}=0$. Thus $1^{st}$ row of $p$ is zero, which is a contradiction because $p$ is invertible. Thus $q_{12}=0$, a contradiction. Therefore $q = p^{-1}b \in C$.
Since $ q \in C$ therefore equation (\ref{eq2}) reduces to \begin{equation}\label{eqD}
	[au^{m} +bu^{m},u^{n_1},u^{n_2},\ldots,u^{n_k}]=0
\end{equation} Denote $ c= a+b = \sum_{ij}c_{ij}e_{ij}$, for $c_{ij}\in C$ and $ 1\leq i,j \leq 2$. Suppose $ c\notin C$ then by Fact (\ref{fac1}), all the entries of $c$ is  non-zero i.e. $c_{ij}\neq 0$. Choosing $ u = e_{22}$ in equation (\ref{eqD}) and right multiply by $e_{11}$ we get:
\begin{equation*}
	c_{12}e_{12}=0
\end{equation*} which implies $c_{12}=0$, a contradiction. Therefore $c= a+b \in C$.

\end{proof}

\begin{lem}\label{lem2}
Let $R$ be a prime ring of characteristic different from $2$, $U$ be the Utumi quotient ring of $R$ and $C$ be the extended centroid of $R$. Let $a,b \in U$, $\beta$ be an outer automorphism on $U$,  and $m, n_1,n_2, \ldots,n_k \geq 1$ are fixed integers such that
\begin{equation}\label{eq4}
[au^{m} + \beta(u^m)b,u^{n_1},u^{n_2},\ldots,u^{n_k}]=0\end{equation} for all $u \in R$ then $\beta$ is the identity map on $R$ and $a+b \in C$ unless $b=0$ and $a \in C$.
\end{lem}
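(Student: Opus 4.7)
The plan is to use the outerness of $\beta$ to promote the one-variable hypothesis into a two-variable generalized polynomial identity on $U$ (via Kharchenko), and then extract the information on $a$ and $b$ by independent specializations of the two variables.

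Since $\beta$ is an outer automorphism of $U$, Fact~\ref{fac4.2} allows me to treat $\beta(u)$ as an independent indeterminate; writing $\beta(u^m) = \beta(u)^m$, the hypothesis upgrades to the GPI
\[
[au^m + v^m b,\, u^{n_1}, u^{n_2}, \ldots, u^{n_k}] = 0
\]
valid for all independent $u, v \in U$. Specializing $v = 0$ kills the second summand and leaves $[au^m, u^{n_1}, \ldots, u^{n_k}] = 0$; since $F_0(x) := ax$ is a generalized derivation, the theorem of Dhara et al.\ recalled in the introduction yields either $a \in C$ or $R$ satisfies $s_4$. Assuming $a \in C$, the central term $au^m$ drops out of the iterated commutator and the GPI collapses to $[v^m b, u^{n_1}, \ldots, u^{n_k}] = 0$ for all $u, v \in U$. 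Setting $v = 1$ yields the Engel identity $[b, u^{n_1}, \ldots, u^{n_k}] = 0$ on $U$, which by a standard Lanski-type argument forces $b \in C$ in characteristic different from $2$ (if $b \notin C$ the identity would be a non-trivial GPI, $R$ would be PI, and a matrix-unit computation in $M_n(C)$ would give a contradiction). With $b$ central, the residual identity becomes $b \cdot [v^m, u^{n_1}, \ldots, u^{n_k}] = 0$; since $[v^m, u^{n_1}, \ldots, u^{n_k}]$ does not vanish identically on $M_2(C)$ (exhibit explicit witnesses using diagonal matrices with distinct eigenvalues together with a suitable off-diagonal $v$), primeness of $U$ forces $b = 0$.

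It remains to dispose of the $s_4$-branch, in which $R$ embeds in $M_2(F)$ for some field $F$. There I would return to the two-variable GPI inside $M_2(F)$ and mirror the strategy of Lemma~\ref{lem1}: apply Fact~\ref{fac1} to assume (after conjugating by a suitable inner automorphism) that the entries of $a$ and $b$ are nonzero whenever they are not scalar, and then test the identity against matrix-unit substitutions such as $u = e_{ii}$ combined with $v = 1$ or $v = e_{ij}$ to force the off-diagonal entries of $a$ and $b$ to vanish and the diagonal entries of $a$ to coincide, yielding $b = 0$ and $a \in C$. I expect this $s_4$-matrix computation to be the main obstacle: Dhara's theorem does not directly deliver $a \in C$ in that branch, so the analysis must be done entry by entry, whereas the rest of the proof consists of clean specializations and appeals to previously established results.
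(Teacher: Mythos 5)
Your proposal hinges entirely on one move: ``since $\beta$ is outer, Fact~\ref{fac4.2} lets me replace $\beta(u^m)$ by an independent indeterminate $v^m$.'' That step is not licensed in the generality you use it. In the Kharchenko--Chuang theory of identities with automorphisms, the replacement of $\beta(x)$ by a fresh variable is valid for an outer automorphism only when $\beta$ is in addition \emph{not Frobenius}, i.e.\ (in characteristic $p$) does not act on the extended centroid as $\lambda\mapsto\lambda^{p^s}$. This is exactly why the paper's proof of Lemma~\ref{lem2} splits into two subcases: the non-Frobenius case, where it performs your replacement and concludes quickly via \cite{dhara2016eng} and a matrix-unit computation, and the Frobenius case, which occupies more than half of the proof and which your argument omits entirely. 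In that second case the paper cannot decouple $\beta(u)$ from $u$; instead it substitutes $u\mapsto\lambda u$, exploits $\beta(\lambda)=\lambda^{p^s}$ to split the identity into $[au^m,u^{n_1},\ldots,u^{n_k}]=0$ (giving $a\in C$) and $[\beta(u^m)b,u^{n_1},\ldots,u^{n_k}]=0$, and then runs a separate idempotent analysis ($e\beta(e)b(1-e)=0$, substitution of the idempotents $(1-e)-(1-e)xe$ and $ex(1-e)+e$, and finally $b[U,U]=0$ via $[U,U]\subseteq M$) to force $b=0$. None of this is recoverable from your outline, so the proof as proposed has a genuine gap.

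The remainder of your plan (specialize $v=0$ to get $a\in C$, then $v=1$ to get the Engel condition on $b$, then kill $b$ by primeness) matches the paper's non-Frobenius branch in substance, and your explicit attention to the $s_4$ alternative in the Dhara et al.\ theorem is, if anything, more careful than the paper's own citation of that result. But to repair the argument you must either restrict the Kharchenko-type replacement to non-Frobenius $\beta$ and supply a separate treatment of the Frobenius case, or cite a version of the automorphism-identity theorem that genuinely covers all outer automorphisms --- the statement of Fact~\ref{fac4.2} as used in the paper does not.
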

\begin{proof}
Since $R$ and $U$ satisfy the same generalized polynomial identity with automorphisms (see Fact \ref{fact5}), it follows that $U$ satisfies 
	\begin{equation}\label{eq5}
	[au^{m} + \beta(u^m)b,u^{n_1},u^{n_2},\ldots,u^{n_k}]=0\end{equation}

We may assume $ a \notin C$ and $b \neq 0$ then $U$ satisfies non-trivial generalized polynomial identity. Therefore by (\cite{martindale1969pr},theorem 3), $U$ is dense subring of the ring of linear transformtion of a vector space $V$ over a division ring $D$. If $\beta$ is not Frobenius then from Fact (\ref{fac4.2}), $U$ satisfies 
\begin{equation}\label{eq6}
[au^{m} + z^m b,u^{n_1},u^{n_2},\ldots,u^{n_k}]=0
\end{equation} then by \cite{dhara2016eng}, we get, $a,b \in C$. In particular from equation $(\ref{eq6})$, $U$ satisfies
\begin{equation}\label{eqH}
b[z^m, u^{n_1}, u^{n_2},\ldots, u^{n_k}]=0
\end{equation}  then by posner's theorem \cite{posner1960pri} there exists a suitable filed $F$ and a positive integer $n$ such that $U$ and $M_{n}(F)$ satisfies the same polynomial identity. For $i \neq j$, choose $z= e_{ii} + e_{ij}$ and $u = e_{ii}$ in equation (\ref{eqH}), we get,\begin{equation*}
0=[e_{ii}+e_{ij},e_{ii}]_k = (-1)^k e_{ij}
\end{equation*} a contradiction, where $e_{ij}$ is the usual matrix unit with 1 at the $(i,j)^{th}$ entry and zero elsewhere.\\
Now we assume that $\beta$ is Frobenius and $dim_{D}V\geq 2$ (because if $dim_{D}V =1$ then $U$ will be commutative). If $char(R)=0$ then $\beta(x)= x$ because $\beta$ is Frobenius. This implies that $\beta$ is inner which is a contradiction. Thus we may assume that $char(R)=p\neq 0$ and $\beta(\lambda)= \lambda^{p^s}$ for all $\lambda \in C$ and for some fixed integer $s\geq 0$. Now replace $u$ by $\lambda u$ in equation $(\ref{eq5})$ then $U$ satisfies 
\begin{equation}\label{eq7}
\lambda^{m+n_1 +\ldots+n_k}	[au^{m} + \lambda^{m(p^s -1)}\beta(u^m)b,u^{n_1},u^{n_2},\ldots,u^{n_k}]=0.
\end{equation}
In particular choose $\lambda^{m}= \gamma$ then $U$ satisfies 
\begin{equation}\label{eq8}
[au^{m} + \gamma^{(p^s -1)}\beta(u^m)b,u^{n_1},u^{n_2},\ldots,u^{n_k}]=0.
\end{equation}
From equation $(\ref{eq5})$ and $(\ref{eq8})$ we get 
\begin{equation}\label{eq9}
(\gamma^{(p^s -1)}-1)[ \beta(u^m)b,u^{n_1},u^{n_2},\ldots,u^{n_k}]=0\
\end{equation} that is $U$ satisfies
\begin{equation}\label{eq10}
[ \beta(u^m)b,u^{n_1},u^{n_2},\ldots,u^{n_k}]=0.
\end{equation} Again from equation $(\ref{eq5})$ and $(\ref{eq10})$ we have that $[au^{m}, u^{n_1}, u^{n_2},\ldots, u^{n_k}]=0$ for all $u \in U$, this implies that $ a\in C$ ( see \cite{dhara2016eng}). Thus we may consider $ b \neq 0$.\\
Now let $ e^ 2 = e \in soc(R)$ then by equation $(\ref{eq10})$,
\begin{equation}\label{eqA}
  [\beta(e)b, e]_{k}=0.
\end{equation}
Right multiply above relation by $(1-e)$ gives
\begin{equation}\label{eq11}
e\beta(e)b(1-e) =0.
\end{equation}
In particular choosing the idempotent $(1-e)-(1-e)xe$ for all $x\in U$ in equation (\ref{eq11}), we have that $U$ satisfies:
\begin{equation}\label{eq12}
	(1-e)\beta(1-e)b(1-e)xe+ (1-e)\beta(1-e)\beta(x)\beta(e)b(e+(1-e)xe)
\end{equation}
\begin{equation*}-(1-e)xe\beta(1-e)b(e+(1-e)xe)+(1-e)xe\beta(1-e)\beta(x)\beta(e)b(e+(1-e)xe).
\end{equation*}
Since $\beta$ is outer then from  Fact (\ref{fac4.2}), $U$ satisfies :
\begin{equation}\label{eq13}
(1-e)\beta(1-e)b(1-e)xe+ (1-e)\beta(1-e)z\beta(e)b(e+(1-e)xe)
\end{equation}
\begin{equation*}-(1-e)xe\beta(1-e)b(e+(1-e)xe)+(1-e)xe\beta(1-e)z\beta(e)b(e+(1-e)xe).
\end{equation*}

In particular for $x=0$, we get $(1-e)\beta(1-e)z\beta(e)be=0$, for all $z\in U$. Hence by primeness of $U$, we have either $(1-e)\beta(1-e)=0$ or $\beta(e)be =0$ for any $e^2 =e \in Soc(U)$.\\
If we consider the case  $(1-e)\beta(1-e)=0$, then equation (\ref{eq12}) reduces to:
\begin{equation*}
		-(1-e)xe\beta(1-e)b(e+(1-e)xe)+(1-e)xe\beta(1-e)z\beta(e)b(e+(1-e)xe).
\end{equation*} In particular $U$ satisfies 
\begin{equation*}
	(1-e)xe\beta(1-e)z\beta(e)b(e+(1-e)xe).
\end{equation*} Now replace $z$ by $ze$ in above relation we get that
\begin{equation}\label{eq14}
(1-e)xe\beta(1-e)ze\beta(e)be.
\end{equation}
 Now since $e \beta(1-e)\neq 0$, otherwise from $(1-e)\beta(1-e)=0$ and we get $\beta(1-e)=0$, which is a contradiction. Thus from equation (\ref{eq14}), we get $e\beta(e)be =0$, then by equation (\ref{eq11}) we get that $e\beta(e)b=0$. Thus, in any case equation (\ref{eqA}) implies that 
 \begin{equation}\label{eq16}
 \beta(e)be =0.
 \end{equation} for any idempotent element $e \in U$.
 If we choose the idempotent element $ ex(1-e)+e$, for all $x \in U$ then from equation (\ref{eq16}), $U$ satisfies:
 \begin{equation*}
 \big(\beta\big(ex(1-e)+\beta(e)\big)b(ex(1-e)+e).
 \end{equation*}Since $\beta(e)be =0$, $U$ satisfies
\begin{equation*}
	\beta(e)\beta(x)b(ex(1-e)+e).
	\end{equation*}
Since $\beta$ is outer then from Fact (\ref{fac4.2}), $U$ satisfies:
\begin{equation*}
	\beta(e)yb(ex(1-e)+e)
\end{equation*} for all $x,y \in U$. In particular for $x=0$, $\beta(e)ybe=0$ i.e. $be =0$ (by primness of $U$) for all $e^2 =e \in U$. Let $M$ denotes the additive subgroup of $U$, which is generated by all the idempotent elements of $U$, then $bM=0$. Moreover, by \cite{herstein1969t}(page 18, corollary), $[U,U]\subseteq M$, i.e. $b[U,U]=0$ implying $b=0$, a contradiction.

\end{proof}
\textit{Remark:} Lemma \ref{lem1} and Lemma \ref{lem2} cover all the cases to prove Proposition \ref{prop1}.\\

\textbf{Proof of main theorem:} From the given hypothesis we have:
\begin{equation}\label{eq17}
	[F(u^m),u^{n_1},u^{n_2},\ldots,u^{n_k}]=0
\end{equation} for all $u \in I$. Since $R,I$ and $U$ satisfy the same generalized polynomial identities as well as the same differential identities with automorphism (see Fact \ref{fact4}, \ref{fact5}). Hence, 
\begin{equation}\label{eq18}
[F(u^m),u^{n_1},u^{n_2},\ldots,u^{n_k}]=0
\end{equation} for all $u \in U$, where $F(u) =cu+d(u)$ for some $c \in U$ and $d$ is skew derivation of $U$ with associated automorphism $\beta$ (see Fact \ref{fac3}).
 We divide the proof into the following cases:\\
 
\textbf{Case 1:} If $d$ is inner derivation then $d(u)= pu-\beta(u)p$ for all $u \in U$ and for some $p \in U$. Then $U$ satisfies 
\begin{equation}\label{eq19}
[(c+p)u^{m}-\beta(u^m)p,u^{n_1},u^{n_2},\ldots,u^{n_k}]=0.
\end{equation} Then by Proposition \ref{prop1} we get the required result.\\
\textbf{Case2:} If $d$ is outer then  $U$ satisfies 
\begin{equation}\label{eq20}
	\Big[cu^m + \sum_{i=0}^{m-1}\beta(u^i)d(u)u^{m-i-1},u^{n_1},u^{n_2},\ldots,u^{n_k}\Big]=0.
\end{equation} by \cite{Chuang2005}, we have:
\begin{equation}\label{eq21}
\Big[cu^m + \sum_{i=0}^{m-1}\beta(u^i)yu^{m-i-1},u^{n_1},u^{n_2},\ldots,u^{n_k}\Big]=0.
\end{equation}
In particular $U$ satisfies
\begin{equation}\label{eqB}
	\Big[\sum_{i=0}^{m-1}\beta(u^i)yu^{m-i-1},u^{n_1},u^{n_2},\ldots,u^{n_k}\Big]=0
\end{equation}
\textbf{Subcase 1:} If $\beta$ is outer derivation then from Fact (\ref{fac4.2}), $U$ satisfies:
\begin{equation}\label{eq22}
	\Big[cu^m + \sum_{i=0}^{m-1}z^iyu^{m-i-1},u^{n_1},u^{n_2},\ldots,u^{n_k}\Big]=0
\end{equation} for all $u,y,z \in U$. In particular for $z=0$, we have that 
\begin{equation}\label{eq23}
	[yu^{m-1},u^{n_1},u^{n_2},\ldots,u^{n_k}]=0.
\end{equation} Then by Posner's theorem, there exists a suitable field $F$ and a positive integer $n$ such that $U$ and $M_n(F)$ satisfy the same generalized polynomial identity. For $ i\neq j$, choose $ y = e_{ii} +e_{ji}$ and $u = e_{ii}$ in equation (\ref{eq23}), we get 
\begin{equation*}
	0= [e_{ii} +e_{ji}, e_{ii}]_k =e_{ji}
\end{equation*} which is a contradiction.

\textbf{Subcase 2:} If $\beta$ is inner then $\beta(x)= pxp^{-1}$, for all $x \in U$ and for some $p \in U$. Hence from equation (\ref{eqB}), $U$ satisfies:
\begin{equation}\label{eq24}
	\Big[\sum_{i=0}^{m-1}pu^ip^{-1}yu^{m-i-1},u^{n_1},u^{n_2},\ldots,u^{n_k}\Big]=0.
\end{equation}
Since $\beta$ is not identity, hence $ p\notin C$,  therefore equation $(\ref{eq24})$ is a non-trivial polynomial identity for $R$. By  \cite{martindale1969pr}, $U$ is isomorphic to a dense ring of linear transformation on some vector space $V$ over $C$.
Firstly, we consider the case when $dim_C V\geq 3$. Since $p\notin C$ therefore there exists some $v \in V$ such that $\{p^{-1}v, v\}$ is linearly $C$-independent. Since $dim_C V \geq 3$, there must exists $ w_1 \in V$ such that $\{p^{-1}v,v,w_1\}$ is linearly $C$- independent. Since $U$ is dense, therefore by Jacobson density theorem ( see Fact \ref{fact7}), there exists $y_1,y_2 \in U$ such that 
\begin{equation*}
	y_1w_1 =0, y_2w_1 =v, y_1p^{-1}v =p^{-1}v, y_1v =v.
\end{equation*}
Therefore by equation (\ref{eq24}) we get that \begin{equation*}
0= \Big[\sum_{i=0}^{m-1}py_1^ip^{-1}y_2y_1^{m-i-1},y_1^{n_1},y_1^{n_2},\ldots,y_1^{n_k}\Big]w_1 = (-1)^{k}v \neq 0
\end{equation*} which is a contradiction.
Now if $dim_C V = 2$, i.e. $U \cong M_2(C)$, ring of $2$-order matrix over field $C$.
\begin{equation*}
p = \begin{bmatrix}
	p_{11} & p_{12}\\
	p_{21}& p_{22}\\
\end{bmatrix},
p^{-1}= \frac{1}{det(p)}\begin{bmatrix}
	 p_{22} & -p_{12}\\
	-p_{21} &  p_{11}\\
\end{bmatrix}
\end{equation*} choosing $u = e_{22}, y = e_{21}$ in equation $(\ref{eq24})$, we get that
\begin{equation}\label{eqE}
	p_{11} p_{22} =0
\end{equation}
Similarly, by choosing $u= e_{11}, y= e_{22}$ in equation $(\ref{eq24})$, we get that
\begin{equation}\label{eqF}
p_{11}p_{12}=0
\end{equation}Since $p$ is invertible therefore $p_{22}$ and $p_{12}$ can not be zero simultaneously, thus from equation (\ref{eqE}) and (\ref{eqF}), it follows $p_{11}=0$. This implies $p_{12} \neq 0$, otherwise $p$ will be singular matrix.\\
Choose $\phi(u)= (1-e_{12})u(1+e_{12}) \in Aut(U)$, then 
\begin{equation*}
\phi\Bigg(\Big[\sum_{i=0}^{m-1}pu^ip^{-1}yu^{m-i-1},u^{n_1},u^{n_2},\ldots,u^{n_k}\Big]\Bigg)=0\end{equation*} i.e. $U$ satisfies:
\begin{equation}
\Big[\sum_{i=0}^{m-1}\phi(p)u^i\phi(p^{-1})yu^{m-i-1},u^{n_1},u^{n_2},\ldots,u^{n_k}\Big]=0.
\end{equation}Denote $\phi(p)_{11}$ as the $(1,1)^{th}$-entry of $\phi(p)$, then by the same argument as above we get $ 0= \phi(p)_{11}= p_{21}$, which is a contradiction (because $p$ is invertible.).\\
For the case $dim_C V=1$, $R$ will be commutative and we have nothing to prove in this case.\\

Following is the very natural consequence of our main theorem:
\begin{cor}
Let $R$ be a prime ring with its Utumi ring of quotients $U$, $F$ a nonzero generalized
derivation of $R$ and $I$ a non-zero ideal of $R$. Suppose that $[F(x^{m}), x^{n}]_k = 0$ for all $x \in I$,where $n,k \geq 1$ are fixed integers. Then there exists $\beta \in C$ such that $F(x) = \beta x$ for all $x \in R$.
\end{cor}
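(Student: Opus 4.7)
The plan is to recognize this corollary as an immediate specialization of the Main Theorem rather than to prove it from scratch. First, I would observe that every generalized derivation $F$ of $R$ is in particular a generalized skew derivation, namely one whose associated automorphism is the identity map on $R$. Thus the $F$ appearing in the corollary automatically satisfies the structural hypothesis required to invoke the Main Theorem.

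Next, I would unfold the iterated commutator notation used in the corollary. Using the recursive definition $[a,b]_k = [[a,b]_{k-1}, b]$ recalled in the introduction, one sees directly that
\[
[F(x^m), x^n]_k = [F(x^m), \underbrace{x^n, x^n, \ldots, x^n}_{k \text{ terms}}],
\]
which is exactly the Engel-type expression featured in the Main Theorem under the special choice $n_1 = n_2 = \cdots = n_k = n$. Consequently, the hypothesis $[F(x^m), x^n]_k = 0$ for all $x \in I$ is a particular case of the hypothesis $[F(u^m), u^{n_1}, u^{n_2}, \ldots, u^{n_k}] = 0$ of the Main Theorem.

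Finally, I would simply invoke the Main Theorem, applied to the prime ring $R$, the nonzero two-sided ideal $I$, the generalized skew derivation $F$, and the exponents $m, n, n, \ldots, n$, to conclude that there exists $\lambda \in C$ with $F(x) = \lambda x$ for every $x \in R$. Setting $\beta := \lambda$ then yields the corollary, and the nonvanishing hypothesis on $F$ forces $\beta \neq 0$ (although the statement itself does not require this). Since the reduction is purely formal, no genuine obstacle arises; the only caveat worth noting is that the Main Theorem's standing assumption $\mathrm{char}(R) \neq 2$ must remain in force in the setting of the corollary, which is consistent with the conventions maintained throughout the paper.
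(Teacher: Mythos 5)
Your proposal is correct and coincides with the paper's own argument: the corollary is obtained by setting $n_1=n_2=\cdots=n_k=n$ in the Main Theorem, after noting that a generalized derivation is a generalized skew derivation with identity associated automorphism and that $[F(x^m),x^n]_k$ unfolds to the $k$-fold iterated commutator $[F(x^m),x^n,\ldots,x^n]$. Your additional remarks on the characteristic assumption and the nonvanishing of $\beta$ are consistent with the paper's conventions.
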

\begin{proof}
Choosing $n_1 =n_2 =\ldots=n_k =n$ in our main theorem we get the required result.
\end{proof}

\textbf{Future research:} Recently, C.K. Liu in \cite{liu2021engel} investigated the structure of $F$ and $G$ if they satisfy the identity $[F(x^n)x^m+x^mG(x^n),x^r]_k =0$ for all $x\in I$, where $F,G$ are non-zero generalized derivation on a prime ring $R$, $I$ is the non-zero ideal of $R$ and $m,n,k \geq 1$ are fixed integers. In the light of \cite{liu2021engel} with this article one can try to find the structure of $F$ and $G$ if they satisfy the identity $[F(x^n)x^m+x^mG(x^n),x^{n_1},x^{n_2},\ldots,x^{n_k}]=0$ for all $x\in I$, where $m,n,n_1,n_2,\ldots,n_k\geq 1$ are fixed integers.

\section*{Acknowledgement}
The authors is highly thankful to the referee(s) for valuable suggestions and comments. This research is not funded.

\end{document}